\newtheorem{theorem}{Theorem}
\newtheorem{lemma}[theorem]{Lemma}
\newtheorem{proposition}[theorem]{Proposition}
\newtheorem{corollary}[theorem]{Corollary}
\theoremstyle{definition}
\newtheorem{definition}[theorem]{Definition}
\theoremstyle{remark}
\newtheorem{remark}[theorem]{\bf Remark}
\numberwithin{equation}{section}
\numberwithin{theorem}{section}
\newcommand{\intav}[1]{\mathchoice {\mathop{\vrule width 6pt height 3 pt depth  -2.5pt
\kern -8pt \intop}\nolimits_{\kern -6pt#1}} {\mathop{\vrule width
5pt height 3  pt depth -2.6pt \kern -6pt \intop}\nolimits_{#1}}
{\mathop{\vrule width 5pt height 3 pt depth -2.6pt \kern -6pt
\intop}\nolimits_{#1}} {\mathop{\vrule width 5pt height 3 pt depth
-2.6pt \kern -6pt \intop}\nolimits_{#1}}}
\newcommand{\intavl}[1]{\mathchoice {\mathop{\vrule width 6pt height 3 pt depth  -2.5pt
\kern -8pt \intop}\limits_{\kern -6pt#1}} {\mathop{\vrule width 5pt
height 3  pt depth -2.6pt \kern -6pt \intop}\nolimits_{#1}}
{\mathop{\vrule width 5pt height 3 pt depth -2.6pt \kern -6pt
\intop}\nolimits_{#1}} {\mathop{\vrule width 5pt height 3 pt depth
-2.6pt \kern -6pt \intop}\nolimits_{#1}}}
\newcommand{\mc}{\mathcal}
\newcommand{\cc}{\mc{C}}
\newcommand{\D}{\mc{D}}
\newcommand{\U}{\mc{U}}
\newcommand{\R}{\mathbb{R}}
\newcommand{\N}{\mathbb{N}}
\newcommand{\Z}{\mathbb{Z}}
\newcommand{\la}{\lambda}
\newcommand{\te}{{\theta}}
\newcommand{\Te}{{\Theta}}
\newcommand{\ve}{{\varepsilon}}
\newcommand{\vr}{{\varphi}}
\def\proj{{\rm proj}}
\def\hd{{\rm HD}}
\begin{document}

\title[Yet another proof of Marstrand's Theorem]{Yet another proof of Marstrand's Theorem}

%    Information for first author
\author[Yuri Lima]{Yuri Lima}
%    Address of record for the research reported here
\address{Instituto Nacional de Matem\'atica Pura e Aplicada, Estrada Dona Castorina 110, 22460-320, Rio de Janeiro, Brasil.}
\email{yurilima@impa.br}
%    \thanks will become a 1st page footnote.

%    Information for second author
\author[Carlos Gustavo Moreira]{Carlos Gustavo Moreira}
%    Address of record for the research reported here
\address{Instituto Nacional de Matem\'atica Pura e Aplicada, Estrada Dona Castorina 110, 22460-320, Rio de Janeiro, Brasil.}
\email{gugu@impa.br}
\

%    General info
%\subjclass[]{}

\date{January, 17, 2011}

\keywords{Dyadic decomposition, Hausdorff dimension, Marstrand theorem.}

\begin{abstract}
In a paper from 1954 Marstrand proved that if $K\subset\R^2$ is a Borel set with Hausdorff dimension greater than
$1$, then its one-dimensional projection has positive Lebesgue measure for almost-all directions. In this article, we
give a combinatorial proof of this theorem, extending the techniques developed in our previous paper \cite{LM2}.
\end{abstract}

\maketitle

\section{Introduction}\label{sec introduction}

If $U$ is a subset of $\R^n$, the diameter of $U$ is $|U|=\sup\{|x-y|;x,y\in U\}$
and, if $\mathcal U$ is a family of subsets of $\R^n$, the {\it diameter} of $\mathcal U$ is defined as
$$\left\|\mathcal U\right\|=\sup_{U\in\,\mathcal U}|U|.$$
Given $s>0$, the {\it Hausdorff $s$-measure} of a set $K\subset\R^n$ is
$$m_s(K)=\lim_{\ve\rightarrow 0}\left(\inf_{\mathcal U\text{ covers }K\atop{\left\|\mathcal U\right\|<\,\ve}}
\sum_{U\in\,\mathcal U}|U|^s\right).$$
In particular, when $n=1$, $m=m_1$ is the Lebesgue measure of Lebesgue measurable sets on $\R$.
It is not difficult to show that there exists a unique $s_0\ge0$ for which $m_s(K)=\infty$ if $s<s_0$ and
$m_s(K)=0$ if $s>s_0$. We define the Hausdorff dimension of $K$ as $\hd(K)=s_0$. Also, for each $\te\in\R$,
let $v_\te=(\cos \te,\sin\te)$, $L_\te$ the line in $\R^2$ through the origin containing $v_\te$ and
$\proj_\te:\R^2\rightarrow L_\te$ the orthogonal projection. From now on, we'll restrict $\te$ to
the interval $[-\pi/2,\pi/2]$, because $L_{\te}=L_{\te+\pi}$.

In $1954$, J. M. Marstrand \cite{Ma} proved the following result on the fractal dimension of plane sets.

\begin{theorem}\label{thm 1}If $K\subset\R^2$ is a Borel set such that $\hd(K)>1$, then
$m(\proj_\te(K))>0$ for $m$-almost every $\te\in\R$.
\end{theorem}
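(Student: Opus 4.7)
My plan is to give a dyadic analogue of the classical potential-theoretic proof of Marstrand's theorem. The first step is to fix $s\in(1,\hd(K))$ and apply Frostman's lemma to obtain a Borel probability measure $\mu$ with $\mathrm{supp}(\mu)\subset K$ and $\mu(B(x,r))\le C\,r^s$ for every $x\in\R^2$ and $r>0$. The goal is then to show that the push-forward $\mu_\te:=(\proj_\te)_*\mu$ is absolutely continuous with respect to $m$ for $m$-a.e.\ $\te\in[-\pi/2,\pi/2]$; since $\mu_\te$ is a probability measure supported on $\proj_\te(K)$, any such density is forced to be non-zero on a subset of $\proj_\te(K)$ of positive Lebesgue measure, which is exactly what is wanted.

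To produce the density, for each scale $n\in\N$ I would partition $L_\te$ (identified with $\R$) into dyadic intervals $I_{n,k}^\te:=[k2^{-n},(k+1)2^{-n}]$, $k\in\Z$, and form the scale-$n$ density
$$f_n^\te\ :=\ \sum_{k\in\Z} 2^n\,\mu\bigl(\proj_\te^{-1}(I_{n,k}^\te)\bigr)\,\chi_{I_{n,k}^\te}.$$
Each $f_n^\te$ has $L^1$-norm one, and $(f_n^\te)_{n\ge 1}$ is the dyadic martingale of $\mu_\te$. Cauchy--Schwarz then gives
$$m\bigl(\mathrm{supp}\,f_n^\te\bigr)\ \ge\ \frac{\|f_n^\te\|_1^2}{\|f_n^\te\|_2^2}\ =\ \frac{1}{\|f_n^\te\|_2^2},$$
so everything reduces to proving $\liminf_n\|f_n^\te\|_2^2<\infty$ for $m$-a.e.\ $\te$. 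Weak-$L^2$ compactness will then extract, along a subsequence, a weak limit $g^\te\in L^2$ which continuous compactly supported test functions identify with the density of $\mu_\te$.

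The crux is the mixed estimate, obtained by expanding $\|f_n^\te\|_2^2$ and swapping orders of integration:
$$\int_{-\pi/2}^{\pi/2}\|f_n^\te\|_2^2\,\dt\ =\ 2^n\iint_{\R^2\times\R^2}\! m\!\left\{\te:\lfloor 2^n\proj_\te(x)\rfloor=\lfloor 2^n\proj_\te(y)\rfloor\right\}d\mu(x)d\mu(y).$$
An elementary planar computation shows that for $x\ne y$ with $r=|x-y|$, the inner Lebesgue measure in $\te$ is at most $C\min(1,2^{-n}/r)$. Splitting the double $\mu$-integral at $|x-y|=2^{-n}$, the near piece is bounded by $2^n(\mu\times\mu)(\{|x-y|\le 2^{-n}\})\le C\,2^{n(1-s)}\to 0$ by Frostman, while the far piece is at most $C\cdot I_1(\mu)=C\iint|x-y|^{-1}d\mu(x)d\mu(y)$, which is finite because $s>1$. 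Hence $\int_{-\pi/2}^{\pi/2}\|f_n^\te\|_2^2\,\dt$ stays uniformly bounded, and Fatou's lemma finishes the argument.

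The main obstacle I anticipate is faithfully translating the above into the purely combinatorial dyadic language of \cite{LM2}: one would replace $\mu$ by counting measure on the family of dyadic squares of side $2^{-n}$ meeting $K$, and the ``energy'' by a genuine count of pairs of such squares whose projections on $L_\te$ fall in a common dyadic interval. The bookkeeping then requires handling, uniformly in $n$, pairs of squares at every relative dyadic distance $2^{-j}$ with $j\le n$, and re-expressing the Frostman bound as a scale-by-scale lower bound on the number of squares meeting $K$. The geometric estimate $C\min(1,2^{-n}/r)$ for the measure of the ``bad'' set of directions remains the heart of the argument, and is precisely what forces the combinatorial sum to converge uniformly in $n$.
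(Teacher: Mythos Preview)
Your proposal is correct: it is a sound discretised version of Kaufman's potential-theoretic proof. You invoke Frostman's lemma to obtain a measure $\mu$ on $K$, form the dyadic martingale $f_n^\te$ of $\mu_\te$ on $L_\te$, and control $\int\|f_n^\te\|_2^2\,d\te$ via the transversality bound $m\{\te:|\proj_\te x-\proj_\te y|\lesssim 2^{-n}\}\lesssim\min(1,2^{-n}/|x-y|)$ together with finiteness of the $1$-energy $I_1(\mu)$. The passage from uniform $L^2$ bounds to absolute continuity of $\mu_\te$, and hence to $m(\proj_\te K)>0$, is standard.

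The paper, however, takes a genuinely different (though parallel) route. Rather than working with a Frostman \emph{measure} and discretising on the target line $L_\te$, it discretises on the \emph{source}: it replaces $K$ by a sequence of dyadic covers $\cc_i$ consisting of squares of \emph{mixed} scales, and sets $f_\te^{\cc}=\sum_{Q\in\cc}\chi_{\proj_\te(Q)}\,|Q|^{s-1}$. The r\^ole of your Frostman bound $\mu(B(x,r))\le Cr^s$ is played by the purely combinatorial notion of a \emph{good dyadic cover}, namely the self-similar inequality $\sum_{\tilde Q\subset Q}|\tilde Q|^s\ll|Q|^s$ for every dyadic $Q$; the paper produces such covers by a greedy pruning argument (Proposition~\ref{prop good cover}) starting from Lemma~\ref{lemma 1}. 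The transversality Lemma~\ref{lemma 2} then bounds $\int\|f_\te^{\cc_i}\|_2^2\,d\te$ by $\sum_{Q,\tilde Q}|x-\tilde x|^{-1}|Q|^s|\tilde Q|^s$, which is the cover analogue of your $I_1(\mu)$ and is estimated by summing over dyadic shells in $|x-\tilde x|$ using the good-cover property. In short: your argument discretises the measure on $L_\te$ at a single scale $2^{-n}$; the paper discretises $K$ itself at all scales simultaneously and never introduces a Frostman measure. Your approach is closer to the textbook proof and slightly shorter; the paper's approach is entirely combinatorial, which is its stated purpose and is what makes the method portable to the arithmetical setting of \cite{LM}. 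Your closing paragraph anticipates a translation via ``counting measure on dyadic squares of side $2^{-n}$'', but note that the paper's covers are \emph{not} single-scale: allowing mixed scales is precisely what makes the good-cover construction work.
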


The proof is based on a qualitative characterization of the ``bad" angles
$\te$ for which the result is not true. Specifically, Marstrand exhibits a Borel measurable function
$f(x,\te)$, $(x,\te)\in\R^2\times[-\pi/2,\pi/2]$, such that $f(x,\te)=\infty$ for $m_d$-almost every $x\in K$,
for every ``bad" angle. In particular,
\begin{equation}\label{equacao 1}
\int_K f(x,\te)dm_d(x)=\infty.
\end{equation}
On the other hand, using a version of Fubini's Theorem, he proves that
$$\int_{-\pi/2}^{\pi/2}d\te\int_K f(x,\te)dm_d(x)=0$$
which, in view of (\ref{equacao 1}), implies that
$$m\left(\{\te\in[-\pi/2,\pi/2]\,;\, m(\proj_\te(K))=0\}\right)=0.$$
These results are based on the analysis of rectangular densities of points.

Many generalizations and simpler proofs have appeared since. One of them came in 1968 by R. Kaufman
who gave a very short proof of Marstrand's Theorem using methods of potential theory. See \cite{K}
for his original proof and \cite{M1}, \cite{PT} for further discussion.

In this article, we give a new proof of Theorem \ref{thm 1}. Our proof makes a study on the fibers
$K\cap{\proj_\te}^{-1}(v)$, $(\te,v)\in\R\times L_\te$, and relies on two facts:

\noindent (I) Transversality condition: given two squares on the plane, the Lebesgue measure of the set of angles
for which their projections have nonempty intersection has an upper bound. See Subsection
\ref{sub transversality}.

\noindent (II) After a regularization of $K$, (I) enables us to conclude that, except for a small set of
angles $\te\in\R$, the fibers $K\cap {\proj_\te}^{-1}(v)$ are not concentrated in a thin region. As a consequence,
$K$ projects into a set of positive Lebesgue measure.

The idea of (II) is based on the work \cite{Mo1} of the second author and was employed in \cite{LM2} to
develop a combinatorial proof of Theorem \ref{thm 1} when $K$ is the product of two regular Cantor sets.
In the present paper, we give a combinatorial proof of Theorem \ref{thm 1} without any restrictions on $K$.
Compared to other proofs of Marstrand's Theorem, the new ideas here are the discretization of the argument and
the use of dyadic covers, which allow the simplification of the method employed.

We also show that the push-forward measure of the restriction of $m_d$ to $K$,
defined as $\mu_\te=(\proj_\te)_*(m_d|_K)$, is absolutely continuous with respect to $m$,
for $m$-almost every $\te\in\R$, and its Radon-Nykodim derivative is square-integrable.

\begin{theorem}\label{thm 2}
The measure $\mu_\te$ is absolutely continuous with respect to $m$ and
its Radon-Nykodim derivative is an $L^2$ function, for $m$-almost every $\te\in\R$.
\end{theorem}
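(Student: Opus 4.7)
The plan is to adapt the dyadic/combinatorial strategy developed for Theorem \ref{thm 1} so as to track $L^2$ information through the discrete approximations. After reducing to the case where $K$ is compact with $0<m_d(K)<\infty$, I fix $n\ge 1$ and let $\mathcal{Q}_n$ denote the dyadic squares of side $2^{-n}$ that meet $K$. Writing $p_Q:=m_d(K\cap Q)$ and letting $c_Q$ be the center of $Q\in\mathcal{Q}_n$, I define on $L_\te$ the piecewise-constant density
$$f_\te^n(v)\;:=\;\sum_{Q\in\mathcal{Q}_n}\frac{p_Q}{|\proj_\te(Q)|}\,\mathbf{1}_{\proj_\te(Q)}(v),$$
so that $d\mu_\te^n:=f_\te^n\,dm$ converges weak-$*$ to $\mu_\te$ as $n\to\infty$. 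Since $|\proj_\te(Q)|\in[2^{-n},\sqrt{2}\,2^{-n}]$, it suffices to control $\|f_\te^n\|_{L^2}$ uniformly in $n$ on a set of full measure in $\te$.

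The heart of the argument is the $L^2$ calculation
$$\int_{-\pi/2}^{\pi/2}\|f_\te^n\|_{L^2}^2\,d\te\;\le\;2^{2n}\sum_{Q,Q'\in\mathcal{Q}_n}p_Q\,p_{Q'}\int_{-\pi/2}^{\pi/2}\bigl|\proj_\te(Q)\cap\proj_\te(Q')\bigr|\,d\te.$$
For pairs with $|c_Q-c_{Q'}|\gtrsim 2^{-n}$, the transversality condition (I) bounds the set of angles where the two projections overlap by $C\,2^{-n}/|c_Q-c_{Q'}|$; since any such overlap has length $\lesssim 2^{-n}$, the inner integral is $\lesssim 2^{-2n}/|c_Q-c_{Q'}|$. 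For near-diagonal pairs $|c_Q-c_{Q'}|\lesssim 2^{-n}$ the integral is trivially $\lesssim 2^{-2n}$, and one handles these via a Frostman-type density bound $p_Q\lesssim 2^{-sn}$ with some $1<s<\hd(K)$, valid after passing to a suitable subset of $K$. Combining the two regimes gives
$$\int_{-\pi/2}^{\pi/2}\|f_\te^n\|_{L^2}^2\,d\te\;\lesssim\;\sum_{Q\neq Q'}\frac{p_Q\,p_{Q'}}{|c_Q-c_{Q'}|}\;+\;2^{(1-s)n}\,m_d(K),$$
and the right-hand side stays bounded in $n$ because the first term is comparable to the $1$-energy $I_1(m_d|_K)=\iint|x-y|^{-1}\,dm_d(x)\,dm_d(y)$, finite thanks to $\hd(K)>1$, while the second tends to $0$.

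From this uniform bound and Fatou's lemma, $\liminf_n\|f_\te^n\|_{L^2}<\infty$ for $m$-almost every $\te$. Fix such a $\te$ and extract a subsequence $n_k\to\infty$ along which $\{f_\te^{n_k}\}$ is bounded in $L^2$; by weak sequential compactness, pass to a further subsequence converging weakly to some $f_\te\in L^2(L_\te)$. Testing against compactly supported continuous functions identifies the weak-$*$ limit $\mu_\te$ of $\mu_\te^{n_k}$ with $f_\te\,dm$, yielding both the absolute continuity and the square-integrability claimed in Theorem \ref{thm 2}. The main technical obstacle is the bookkeeping in the $L^2$ calculation: handling far pairs via transversality, near-diagonal pairs via the Frostman-type density bound, and arranging that the two contributions combine into an $n$-independent bound reducible to $I_1(m_d|_K)$. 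Once that estimate is in place, the extraction of an $L^2$ density for $\mu_\te$ is standard functional-analytic boilerplate.
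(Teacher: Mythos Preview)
Your argument is correct in outline, but it follows a genuinely different route from the paper's. The paper first deduces absolute continuity of $\mu_\te$ from Theorem~\ref{thm 1} (Corollary~\ref{corollary}), and then bounds the Hardy--Littlewood averages $f_{\te,\ve}(x)=\frac{1}{2\ve}\,\mu_\te([x-\ve,x+\ve])$ pointwise by the discrete functions $f_\te^{\cc}$ built from \emph{good dyadic covers} (Definition~\ref{def good cover}); the uniform $L^2$ bound on $f_\te^{\cc_i}$ already established in the proof of Theorem~\ref{thm 1} via \eqref{equation 2} then feeds into Fatou's lemma applied to $f_{\te,\ve}\to f_\te$. By contrast, you work with uniform dyadic grids and the natural weights $p_Q=m_d(K\cap Q)$, reduce the $\te$-averaged $L^2$ norm to the Riesz $1$-energy $I_1(m_d|_K)$, and extract the density by weak-$L^2$ compactness. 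Your route is essentially the classical Kaufman potential-theoretic argument in discretized form, whereas the paper's aim is precisely to bypass the energy integral and give a combinatorial proof through the good-dyadic-cover machinery; both reach the same conclusion, but the paper's version buys a template that ports to discrete and arithmetic settings (Section~\ref{sec final remarks}), while yours is shorter and more standard. One small slip: for near-diagonal pairs the inner integral $\int_{-\pi/2}^{\pi/2}|\proj_\te(Q)\cap\proj_\te(Q')|\,d\te$ is only $\lesssim 2^{-n}$, not $2^{-2n}$ (take $Q=Q'$); however, after multiplying by $2^{2n}$ and using the Frostman bound $p_{Q'}\lesssim 2^{-sn}$ on the $O(1)$ neighbours of each $Q$, the near-diagonal contribution is still $\lesssim 2^{(1-s)n}m_d(K)$, so your final estimate stands.
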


\begin{remark}
Theorem \ref{thm 2}, as in this work, follows from most proofs of Marstrand's theorem
and, in particular, is not new as well.
\end{remark}

Marstrand's Theorem is a classical result in Geometric Measure Theory. In particular, if $K=K_1\times K_2$ is a cartesian product of two one-dimensional subsets of $\R$, Marstrand's theorem translates to ``$m(K_1+\la K_2)>0$
for $m$-almost every $\la\in\R$''. The investigation of such {\it arithmetic sums} $K_1+\la K_2$ has been an
active area of Mathematics, in special when $K_1$ and $K_2$ are dynamically defined Cantor sets. Although he did
not know, M. Hall \cite{Ha} proved, in 1947, that the Lagrange spectrum\footnote{The Lagrange spectrum is the set of
best constants of rational approximations of irrational numbers. See \cite{CuF} for the specific description.}
contains a whole half line, by showing that the arithmetic sum $K(4)+K(4)$ of a certain Cantor set $K(4)\subset\R$
with itself contains $[6,\infty)$.

Marstrand's Theorem for product of Cantor sets is also fundamental in certain results of dynamical bifurcations,
namely homoclinic bifurcations in surfaces. For instance, in \cite{PY} it is used to show that hyperbolicity is
not prevalent in homoclinic bifurcations associated to horseshoes with Hausdorff dimension larger than one;
in \cite{MY} it is used to prove that stable intersections of regular Cantor sets are dense in the region
where the sum of their Hausdorff dimensions is larger than one; in \cite{MY2} to show that, for homoclinic
bifurcations associated to horseshoes with Hausdorff dimension larger than one, typically there are open sets
of parameters with positive Lebesgue density at the initial bifurcation parameter corresponding to persistent
homoclinic tangencies.

In the connection of these two applications, we point out that a formula for the Hausdorff
dimension of $K_1+K_2$, under mild assumptions of non-linear Cantor sets $K_1$ and $K_2$, has been
obtained by the second author in \cite{Mo1} and applied in \cite{Mo2} to prove that the Hausdorff dimension of
the Lagrange spectrum increases continuously. In parallel to this non-linear setup, Y. Peres and P. Shmerkin
proved the same phenomena happen to self-similar Cantor sets without algebraic resonance \cite{PeSh}.
Finally, M. Hochman and P. Shmerkin extended and unified many results concerning projections of products of
self-similar measures on regular Cantor sets \cite{HoSh}.

The paper is organized as follows. In Section \ref{sec preliminaries} we introduce the basic notations
and definitions. Section \ref{sec calculations} is devoted to the main calculations, including the
transversality condition in Subsection \ref{sub transversality} and the proof of existence of good dyadic
covers in Subsection \ref{subsec dyadic}. Finally, in Section \ref{sec proof of thm}
we prove Theorems \ref{thm 1} and \ref{thm 2}. We also collect final remarks
in Section \ref{sec final remarks}.

\section{Preliminaries}\label{sec preliminaries}

\subsection{Notation}\label{subsec notation}

The distance in $\R^2$ will be denoted by $|\cdot|$. Let $B_r(x)$ denote the open ball of $\R^2$ centered in $x$
with radius $r$. As in Section \ref{sec introduction}, the diameter of $U\subset\R^2$
is $|U|=\sup\{|x-y|;x,y\in U\}$ and, if $\mathcal U$ is a family of subsets of $\R^2$, the diameter of
$\mathcal U$ is defined as
$$\left\|\mathcal U\right\|=\sup_{U\in\,\mathcal U}|U|.$$
Given $s>0$, the Hausdorff $s$-measure of a set $K\subset\R^2$ is $m_s(K)$ and its Hausdorff dimension is
$\hd(K)$. In this work, we assume $K$ is contained in $[0,1)^2$.

\begin{definition}\label{def s-set}
A Borel set $K\subset\R^2$ is an $s$-{\it set} if $\hd(K)=s$ and $0<m_s(K)<\infty$.
\end{definition}

Let $m$ be the Lebesgue measure of Lebesgue measurable sets on $\R$.
For each $\te\in\R$, let $v_\te=(\cos \te,\sin\te)$, $L_\te$ the line in $\R^2$ through the origin containing
$v_\te$ and $\proj_\te:\R^2\rightarrow L_\te$ the orthogonal projection onto $L_\te$.

A square $[a,a+l)\times[b,b+l)\subset\R^2$ will be denoted by $Q$ and its center, the point
$(a+l/2,b+l/2)$, by $x$.

We use Vinogradov notation to compare the asymptotic of functions.

\begin{definition}\label{def vinogradov}
Let $f,g:\N$ or $\R\rightarrow\R$ be two real-valued functions. We say $f\ll g$ if there is a
constant $C>0$ such that
$$|f(x)|\le C\cdot |g(x)|\,,\ \ \forall\,x\in\N\text{ or }\R.$$
If $f\ll g$ and $g\ll f$, we write $f\asymp g$.
\end{definition}

\subsection{Dyadic squares}\label{subsec dyadic}

Let $\D_0$ be the family of unity squares of $\R^2$ congruent to $[0,1)^2$ and with vertices in the lattice
$\Z^2$. Dilating this family by a factor of $2^{-i}$, we obtain the family $\D_i$, $i\in\Z$.

\begin{definition}
Let $\D$ denote the union of $\D_i$, $i\in\Z$. A {\it dyadic square} is any element $Q\in\D$.
\end{definition}

The dyadic squares possess the following properties:
\begin{enumerate}[(1)]
\item Every $x\in\R^2$ belongs to exactly one element of each family $\D_i$.
\item Two dyadic squares are either disjoint or one is contained in the other.
\item A dyadic square of $\D_i$ is contained in exactly one dyadic square of $\D_{i-1}$ and contains
exactly four dyadic squares of $\D_{i+1}$.
\item Given any subset $U\subset\R^2$, there are four dyadic squares, each with side length at most $2\cdot|U|$,
whose union contains $U$.
\end{enumerate}

(1) to (3) are direct. To prove (4), let $R$ be smallest rectangle of $\R^2$ with sides parallel to
the axis that contains $\overline{U}$. The sides of $R$ have length at most $|U|$. Let $i\in\Z$ such that
$2^{-i-1}\le |U|<2^{-i}$ and choose a dyadic square $Q\in\D_i$ that intersects $R$. If $Q$ contains $U$, we're
done. If not, $Q$ and three of its neighbors cover $U$.

\begin{center}
\psset{unit=.4cm} \begin{pspicture}(0,0)(10,10)

\pspolygon[linestyle=dashed,dash=3pt 2pt](0,0)(10,0)(10,10)(0,10)\pspolygon(5,5)(5,10)(10,10)(10,5)
\pspolygon[linecolor=blue](6,7)(6,2)(3,2)(3,7)

\pscustom[fillstyle=solid,fillcolor=red,linecolor=red]{
\pscurve(4,7)(4.5,6)(4.5,4)(5,3.7)(5.5,3.3)(6,3)(5.6,2.7)(5,2.7)(4.7,2.3)(4.5,2)(4.2,2.4)(3.8,3)
(3.5,3.7)(3,3.8)(3.3,4.2)(3.5,4.5)(3.3,5)(3.4,5.3)(3.6,5.8)(3.7,6.3)(3.8,6.7)(4,7)
}

\uput[270](9.3,10){$Q$}\uput[135](3.1,6.9){$R$}\uput[270](5.3,5){$U$}
\end{pspicture}\end{center}

\begin{definition}
A {\it dyadic cover} of $K$ is a finite subset $\cc\subset\D$ of disjoint dyadic squares such that
$$K\subset\bigcup_{Q\in\cc}Q.$$
\end{definition}

Due to (4), for any family $\U$ of subsets of $\R^2$, there is a dyadic family $\cc$ such that
$$\bigcup_{Q\in\cc}Q\supset\bigcup_{U\in\U}U\ \ \ \text{ and}\ \ \ \sum_{Q\in\cc}|Q|^s<16\cdot \sum_{U\in\U}|U|^s$$
and so, if $K$ is an $s$-set, there exists a sequence $(\cc_i)_{i\ge 1}$ of dyadic covers of $K$ such that
\begin{equation}\label{equation 4}
\sum_{Q\in\cc_i}|Q|^s\asymp 1\,.
\end{equation}

\section{Calculations}\label{sec calculations}

Let $K\subset\R^2$ be a Borel set with Hausdorff dimension greater than one. From now on, we assume that every
cover of $K$ is composed of dyadic squares of sides at most one. Before going into the calculations, we make
the following reduction.

\begin{lemma}\label{lemma 1}
Let $K$ be a Borel subset of $\R^2$. Given $s<\hd(K)$, there exists an $s$-set $K'\subset K$ such that
$$m_s(K'\cap B_r(x))\ll r^d\,,\ \ \text{for any }x\in\R^2\text{ and }0<r\le 1.$$
In other words, there exists a constant $b>0$ such that
$$m_s(K'\cap B_r(x))\le b\cdot r^d\,,\ \ \text{for any }x\in\R^2\text{ and }0<r\le 1.$$
\end{lemma}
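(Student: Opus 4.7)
The plan is to obtain $K'$ by a two-step reduction: first, pass to a compact subset of $K$ with positive, finite $s$-dimensional Hausdorff measure; second, restrict further to a subset on which the local $s$-density of $m_s$ is uniformly bounded. Since $s<\hd(K)$ we have $m_s(K)>0$, so by the classical theorem of Davies (generalising Besicovitch in the planar case), $K$ contains a compact subset $F$ with $0<m_s(F)<\infty$, i.e.\ $F$ is an $s$-set in the sense of Definition~\ref{def s-set}. (I read the exponent $r^d$ in the statement as a typo for $r^s$, which is what the argument naturally yields and what is consistent with $K'$ being an $s$-set.)

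For the second reduction I would invoke the standard upper $s$-density theorem: for any $s$-set $F\subset\R^2$, the inequality $\limsup_{r\to 0^+}m_s(F\cap B_r(x))/r^s\le 2^s$ holds for $m_s$-almost every $x\in F$. Since $m_s(F)<\infty$, this implies that the nested Borel sets
$$F_N:=\left\{x\in F\,;\,m_s(F\cap B_r(x))\le N\cdot r^s\text{ for every }0<r\le 1\right\}$$
exhaust $F$ up to an $m_s$-null set, so $m_s(F_{N_0})>0$ for some $N_0\in\N$. Applying Davies' theorem once more to $F_{N_0}$ produces a compact $s$-set $K'\subset F_{N_0}$, and by construction $m_s(K'\cap B_r(x))\le N_0\cdot r^s$ for every $x\in K'$ and every $0<r\le 1$.

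To extend the bound to arbitrary $x\in\R^2$, observe that if $K'\cap B_r(x)$ is non-empty and $y$ is any point in this intersection, then $B_r(x)\subset B_{2r}(y)$, whence
$$m_s(K'\cap B_r(x))\le m_s(K'\cap B_{2r}(y))\le 2^s N_0\cdot r^s$$
as long as $2r\le 1$; for $r\in(\h,1]$ the trivial estimate $m_s(K'\cap B_r(x))\le m_s(K')\le 2^s m_s(K')\cdot r^s$ takes over. Setting $b:=\max\{2^s N_0,\,2^s m_s(K')\}$ yields the desired inequality for all $x\in\R^2$ and $0<r\le 1$.

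The main obstacle is the first reduction: Davies' theorem on the existence of compact subsets of positive, finite Hausdorff $s$-measure is genuinely non-elementary and is the one place I would invoke a black box from classical geometric measure theory; in the planar Borel setting treated here it could alternatively be replaced by a direct Frostman/dyadic-cover construction in the spirit of the subsequent sections, but citing the standard theorem keeps the argument short. The density step producing $F_{N_0}$, and the final doubling argument enlarging $N_0$ to $b$, are elementary.
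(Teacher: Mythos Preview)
Your argument is correct and is precisely the standard route to this result: Davies' theorem (or equivalently Frostman's lemma) to extract an $s$-set, the upper $s$-density theorem to get a uniform local mass bound on a subset of positive measure, and a doubling argument to pass from centers in $K'$ to arbitrary centers. Your reading of $r^d$ as a typo for $r^s$ is also correct.

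Regarding comparison with the paper: the paper does \emph{not} supply a proof of this lemma. It is stated and immediately used (``By the above lemma, we may assume $K$ is an $s$-set, with $s>1$''), treated as a known reduction from geometric measure theory. So there is nothing to compare your approach against; you have filled in what the authors left as a black box, and done so along the expected lines. One minor remark: the second application of Davies' theorem, to pass from $F_{N_0}$ to a compact $K'$, is not actually needed for the statement as written, since the paper only asks for an $s$-set (Borel suffices) and $F_{N_0}$ already is one with the required bound; but it does no harm.
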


By the above lemma, we may assume $K$ is an $s$-set, with $s>1$.

Given a dyadic cover $\cc$ of $K$, let, for each $\te\in[-\pi/2,\pi/2]$, $f_\te^\cc:L_\te\rightarrow\R$ be the
function defined by
$$f_\te^\cc(x)=\sum_{Q\in\cc}\chi_{\proj_\te(Q)}(x)\cdot |Q|^{s-1}\,,$$
where $\chi_{\proj_\te(Q)}$ denotes the characteristic function of the set $\proj_\te(Q)$. The reason we consider
this function is that it captures the Hausdorff s-measure of $K$ in the sense that
\begin{eqnarray*}
\int_{L_\te}f_\te^\cc(x)dm(x)&=&\sum_{Q\in\cc}|Q|^{s-1}\cdot\int_{L_\te}\chi_{\proj_\te(Q)}(x)dm(x)\\
                             &=&\sum_{Q\in\cc}|Q|^{s-1}\cdot m(\proj_\te(Q))
\end{eqnarray*}
which, as $|Q|/2\le m(\proj_\te(Q))\le |Q|$, satisfies
$$\int_{L_\te}f_\te^\cc(x)dm(x)\asymp\sum_{Q\in\cc}|Q|^s.$$
If in addition $\cc$ satisfies (\ref{equation 4}), then
\begin{equation}\label{equation 5}
\int_{L_\te}f_\te^\cc(x)dm(x)\asymp 1\ ,\ \ \forall\,\te\in[-\pi/2,\pi/2].
\end{equation}
Denoting the union $\bigcup_{Q\in\cc}Q$ by $\cc$ as well, an application of the Cauchy-Schwarz inequality gives that
$$m(\proj_\te(\cc))\cdot\left(\int_{\proj_\te(\cc)}\left(f_\te^\cc\right)^2dm\right)
\ge\left(\int_{\proj_\te(\cc)}f_\te^\cc dm\right)^2\asymp 1.$$
The above inequality implies that if $(\cc_i)_{i\ge 1}$ is a sequence of dyadic covers of $K$  satisfying
(\ref{equation 4}) with diameters converging to zero and the $L^2$-norm of $f_\te^{\cc_i}$ is uniformly bounded,
that is
\begin{equation}\label{equation 6}
\int_{\proj_\te(\cc_i)}\left(f_\te^{\cc_i}\right)^2dm\ll 1,
\end{equation}
then
$$m(\proj_\te(K))=\lim_{i\rightarrow\infty}m(\proj_\te(\cc_i))\gg 1$$
and so $\proj_\te(K)$ has positive Lebesgue measure, as wished. This conclusion will be obtained for
$m$-almost every $\te\in[-\pi/2,\pi/2]$ by showing that
\begin{equation}\label{equation 2}
I_i\doteq\int_{-\pi/2}^{\pi/2}d\te\int_{L_\te}\left(f_\te^{\cc_i}\right)^2dm\ll 1.
\end{equation}

\subsection{Rewriting the integral $I_i$}\label{sub rewriting}

For simplicity, let $f$ denote $f_\te^{\cc_i}$. Then the interior integral of (\ref{equation 2}) becomes

\begin{eqnarray*}
\int_{L_\te}f^2dm&=&\int_{L_\te}\left(\sum_{Q\in\cc_i}\chi_{\proj_\te(Q)}\cdot |Q|^{s-1}\right)\cdot
\left(\sum_{\tilde Q\in\cc_i}\chi_{\proj_\te(\tilde Q)}\cdot |\tilde Q|^{s-1}\right)dm\\
&=&\sum_{Q,\tilde Q\in\cc_i}|Q|^{s-1}\cdot|\tilde Q|^{s-1}\cdot\int_{L_\te}\chi_{\proj_\te(Q)\cap\proj_\te(\tilde Q)}dm\\
&=&\sum_{Q,\tilde Q\in\cc_i}|Q|^{s-1}\cdot|\tilde Q|^{s-1}\cdot m(\proj_\te(Q)\cap\proj_\te(\tilde Q))
\end{eqnarray*}
and, using the inequalities
$$m(\proj_\te(Q)\cap\proj_\te(\tilde Q))\le \min\{m(\proj_\te(Q)),m(\proj_\te(\tilde Q)\}\le\min\{|Q|,|\tilde Q|\}\,,$$
it follows that
\begin{equation}\label{equation 1}
\int_{L_\te}f^2dm\ll\sum_{Q,\tilde Q\in\cc_i}|Q|^{s-1}\cdot
|\tilde Q|^{s-1}\cdot\min\{|Q|,|\tilde Q|\}.
\end{equation}
We now proceed to prove (\ref{equation 2}) by a double-counting argument. To this matter, consider, for each
pair of squares $(Q,\tilde Q)\in \cc_i\times\cc_i$, the set
$$\Te_{Q,\tilde Q}=\left\{\te\in[-\pi/2,\pi/2];\proj_\te(Q)\cap\proj_\te(\tilde Q)\not=\emptyset\right\}.$$
Then
\begin{eqnarray}
I_i&\ll&\sum_{Q,\tilde Q\in\cc_i}|Q|^{s-1}\cdot|\tilde Q|^{s-1}\cdot\min\{|Q|,|\tilde Q|\}\cdot \int_{-\pi/2}^{\pi/2}\chi_{\Te_{Q,\tilde Q}}(\te)d\te\nonumber\\
&=&\sum_{Q,\tilde Q\in\cc_i}|Q|^{s-1}\cdot|\tilde Q|^{s-1}\cdot\min\{|Q|,|\tilde Q|\}\cdot m(\Te_{Q,\tilde Q})\,. \label{equation 3}
\end{eqnarray}

\subsection{Transversality condition}\label{sub transversality}

This subsection estimates the Lebesgue measure of $\Te_{Q,\tilde Q}$.

\begin{lemma}\label{lemma 2}
If $Q,\tilde Q$ are squares of $\R^2$ and $x,\tilde x\in\R^2$ are its centers, respectively, then
$$m\left(\Te_{Q,\tilde Q}\right)\le2\pi\cdot\dfrac{\max\{|Q|,|\tilde Q|\}}{|x-\tilde x|}\,\cdot$$
\end{lemma}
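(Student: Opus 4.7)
The plan is to reduce the statement to a very concrete trigonometric computation about the projection of the center-to-center vector $x-\tilde x$.

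First, suppose $\theta \in \Theta_{Q,\tilde Q}$, so there exist $q \in Q$ and $\tilde q \in \tilde Q$ with $\proj_\theta(q) = \proj_\theta(\tilde q)$. Since the center of a square of diameter $|Q|$ is at distance at most $|Q|/2$ from any point of the square, and projection is $1$-Lipschitz, the triangle inequality yields
$$\left|\proj_\theta(x) - \proj_\theta(\tilde x)\right| \le \frac{|Q|}{2} + \frac{|\tilde Q|}{2} \le \max\{|Q|, |\tilde Q|\}.$$

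Next, I would rewrite the left-hand side in terms of the angle between $\theta$ and the direction of $x - \tilde x$. Write $x - \tilde x = |x - \tilde x|\, v_\alpha$ for some $\alpha \in \R$. Then $\proj_\theta(x) - \proj_\theta(\tilde x) = \langle x - \tilde x, v_\theta\rangle\, v_\theta$, so
$$\left|\proj_\theta(x) - \proj_\theta(\tilde x)\right| = |x - \tilde x| \cdot |\cos(\theta - \alpha)|.$$
Combining with the previous display, every $\theta \in \Theta_{Q,\tilde Q}$ satisfies
$$|\cos(\theta - \alpha)| \le r, \qquad \text{where } r := \frac{\max\{|Q|, |\tilde Q|\}}{|x - \tilde x|}.$$

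The final step is to estimate the measure of the set of such $\theta$ in $[-\pi/2, \pi/2]$. If $r \ge 1$ the bound $2\pi r \ge 2\pi$ exceeds $\pi$, so the conclusion is automatic. If $r < 1$, the condition $|\cos(\theta - \alpha)| \le r$ localizes $\theta - \alpha$ into two arcs of length $2\arcsin(r)$ around $\pm \pi/2$; intersecting with $[-\pi/2, \pi/2]$ therefore yields a set of measure at most $2\arcsin(r) \le \pi r \le 2\pi r$, using the elementary bound $\arcsin(r) \le \tfrac{\pi}{2}r$ valid for $r \in [0,1]$. This gives the desired estimate.

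The argument is essentially routine; the only mild obstacle is making sure the geometric constants line up, namely that the maximum distance from the center of a dyadic square to one of its corners is $|Q|/2$ (half the diameter rather than half the side-length), and that the bound $\arcsin(r) \le \tfrac{\pi}{2}r$ is applied only in the regime $r \le 1$, with the remaining case handled trivially by the length of $[-\pi/2, \pi/2]$.
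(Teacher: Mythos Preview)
Your proof is correct and follows essentially the same route as the paper's: bound $|\proj_\theta(x)-\proj_\theta(\tilde x)|$ via the triangle inequality, rewrite it as $|x-\tilde x|$ times a sine/cosine of the angle between $v_\theta$ and the line through $x,\tilde x$, and invert using $\arcsin(r)\le\tfrac{\pi}{2}r$. Your version is in fact slightly tidier, since you obtain $\max\{|Q|,|\tilde Q|\}$ rather than $2\max\{|Q|,|\tilde Q|\}$ in the first step and you explicitly dispose of the trivial case $r\ge 1$.
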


\begin{proof} Let $\te\in\Te_{Q,\tilde Q}$ and consider the figure.\\

\begin{center}
\psset{unit=.5cm} \begin{pspicture}(-3,-.7)(8,6)

\psline(-2,-1)(8,4)\psline{*-*}(-1,6)(1.6,.8)\psline{*-*}(4,4)(4.8,2.4)\psline{*-*}(-1,6)(4,4)
\psline[linestyle=dashed,dash=5pt 5pt](-2.5,-.7)(7,-.7)\psline[linestyle=dashed,dash=5pt 5pt](-1,2)(-1,6)
\psline[linestyle=dashed,dash=5pt 5pt]{*-*}(.8,2.4)(4,4)\psarc(-1,6){2}{270}{338}\psarc(-1.4,-.7){1.4}{0}{27}

\uput[90](-1,6){$x$}\uput[45](4,4){$\tilde x$}\uput[-45](1.7,1.2){$\proj_\te(x)$}
\uput[-45](4.8,2.7){$\proj_\te(\tilde x)$}\uput[90](7.4,3.9){$L_\te$}\uput[-60](-.7,4.1){$\te$}
\uput[-60](.6,4.7){$|\te-\vr_0|$}\uput[30](0,-.5){$\te$}
\end{pspicture} \end{center}
Since $\proj_\te(Q)$ has diameter at most $|Q|$ (and the same happens
to $\tilde Q$), we have $|\proj_\te(x)-\proj_\te(\tilde x)|\le 2\cdot\max\{|Q|,|\tilde Q|\}$ and then,
by elementary geometry,
\begin{eqnarray*}
\sin(|\te-\vr_0|)&=  &\dfrac{|\proj_\te(x)-\proj_\te(\tilde x)|}{|x-\tilde x|}\\
                 &\le&2\cdot\dfrac{\max\{|Q|,|\tilde Q|\}}{|x-\tilde x|}\\
\Longrightarrow\hspace{1cm}|\te-\vr_0|&\le&\pi\cdot\dfrac{\max\{|Q|,|\tilde Q|\}}{|x-\tilde x|}\,,
\end{eqnarray*}
because $\sin^{-1}y\le \pi y/2$. As $\vr_0$ is fixed, the lemma is proved.
\end{proof}

We point out that, although ingenuous, Lemma \ref{lemma 2} expresses the crucial property of transversality that
makes the proof work, and all results related to Marstrand's Theorem use a similar idea in one way or another.
See \cite{R} where this tranversality condition is also exploited.

By Lemma \ref{lemma 2} and (\ref{equation 3}), we get
\begin{equation}\label{equation 9}
I_i\ll\sum_{Q,\tilde Q\in\cc_i}|x-\tilde x|^{-1}\cdot |Q|^s\cdot|\tilde Q|^s.
\end{equation}

\subsection{Good dyadic covers}\label{sub good covers}

We estimate the last summand by choosing appropriate dyadic covers $\cc_i$. Let $\cc$ be an arbitrary
dyadic cover of $K$.

\begin{definition}\label{def good cover}
The dyadic cover $\cc$ is {\it good} if
\begin{equation}\label{equation 7}
\sum_{\tilde Q\in\cc\atop{\tilde Q\subset Q}}|\tilde Q|^s\ll |Q|^s\,,
\end{equation}
where $Q$ runs over all elements of $\D$.
\end{definition}

The existence of good dyadic covers is provided below.

\begin{proposition}\label{prop good cover}
Let $K\subset\R^2$ be an $s$-set. Then, for any $\delta>0$, there exists a good dyadic cover of
$K$ with diameter less than $\delta$.
\end{proposition}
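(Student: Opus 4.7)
\emph{The plan} is to construct the good cover from dyadic squares carrying a controlled proportion of the $s$-measure, exploiting the upper density theorem for $s$-sets together with the Frostman-type regularity provided by Lemma~\ref{lemma 1}.

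The upper density theorem supplies a universal constant $c_0>0$ with
\[
\limsup_{i\to\infty}\frac{m_s(K\cap Q_i(x))}{|Q_i(x)|^s}\ge c_0
\]
at $m_s$-a.e.\ $x\in K$, where $Q_i(x)\in\D_i$ denotes the dyadic square containing $x$. For the given $\delta>0$, each such $x$ then lies in a dyadic square $\tilde Q(x)\in\D$ with $|\tilde Q(x)|<\delta$ and $m_s(K\cap\tilde Q(x))\ge c_0|\tilde Q(x)|^s$; I pick the largest such $\tilde Q(x)$. Let $\mathcal V^*$ be the collection of maximal (under inclusion) elements of $\{\tilde Q(x)\}_x$. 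By the nesting property of dyadic squares, $\mathcal V^*$ is pairwise disjoint, covers $K$ up to an $m_s$-null set, and satisfies $\sum_{\tilde Q\in\mathcal V^*}|\tilde Q|^s\le m_s(K)/c_0<\infty$.

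Using inner regularity of $m_s$ on Borel sets I may first pass to a compact $K'\subseteq K$ of positive $m_s$-measure, which is all that is needed for the subsequent proofs of Theorems~\ref{thm 1} and~\ref{thm 2}. Compactness of $K'$, combined with a small enlargement to handle the half-open character of dyadic squares, lets me extract a finite subfamily $\mathcal F\subseteq\mathcal V^*$ covering $K'$ up to a set of arbitrarily small $m_s$-measure. I cover the remainder by a residual dyadic patch $\cc'$ with $\|\cc'\|<\delta$ and $\sum_{\tilde Q\in\cc'}|\tilde Q|^s<\eta$ (with $\eta>0$ to be chosen), set $\cc:=\mathcal F\cup\cc'$, and discard any element strictly nested in another to obtain a finite disjoint dyadic cover of $K'$ with $\|\cc\|<\delta$.

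For the good property, disjointness of the $\tilde Q\in\mathcal F$ together with Lemma~\ref{lemma 1} gives, for every $Q\in\D$,
\[
\sum_{\tilde Q\in\mathcal F,\,\tilde Q\subseteq Q}|\tilde Q|^s\le\frac{1}{c_0}\sum_{\tilde Q\in\mathcal F,\,\tilde Q\subseteq Q}m_s(K\cap\tilde Q)\le\frac{m_s(K\cap Q)}{c_0}\le\frac{b}{c_0}|Q|^s,
\]
so the density portion of $\cc$ obeys the good bound with a universal constant $b/c_0$. The principal obstacle is then to treat the residual piece $\cc'$: one must bound $\sum_{\tilde Q\in\cc',\,\tilde Q\subseteq Q}|\tilde Q|^s$ by a universal multiple of $|Q|^s$ for every $Q\in\D$, not merely for those $Q$ large enough that the trivial estimate $\sum\le\eta$ suffices. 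I expect to handle this by iterating the upper-density selection on the residual set (whose $m_s$-measure is small but positive) until every element of the final cover carries its own density guarantee, which yields a universal good constant depending only on $c_0$ and on the regularity constant $b$ from Lemma~\ref{lemma 1}.
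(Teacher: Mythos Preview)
Your route is quite different from the paper's, and the part you yourself flag as the ``principal obstacle'' is a genuine gap that the proposed iteration does not close. Iterating the upper-density selection on successive residuals still leaves, in the limit, the $m_s$-null set of non-density points; that set must be covered by \emph{some} finite dyadic family, and nothing in your argument prevents that family from concentrating badly inside small dyadic squares. A global bound $\sum_{\tilde Q\in\cc'}|\tilde Q|^s<\eta$ controls only those $Q$ with $|Q|^s\ge\eta$ and says nothing at scales below $\eta^{1/s}$, so no choice of $\eta$ yields a universal constant. The density machinery, the passage to a compact subset, and the iteration are all working around a difficulty that has a one-line fix.

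The paper's argument is far more direct: start with \emph{any} dyadic cover $\cc$ of diameter $<\delta$, arranged (via Lemma~\ref{lemma 1}) so that the good inequality holds at the coarsest relevant scale $\D_{i_0}$ and hence at all larger scales. Then perform a greedy \emph{merging} operation on the small scales: whenever some $Q\in\bigcup_{i>i_0}\D_i$ violates $\sum_{\tilde Q\in\cc,\,\tilde Q\subset Q}|\tilde Q|^s\le|Q|^s$, replace the offending subfamily by the single square $Q$. Each merge strictly decreases the total $s$-mass and preserves the large-scale bound, so after finitely many steps no violation remains and the cover is good. This trick is exactly what is missing from your treatment of $\cc'$; once you have it, the density argument becomes superfluous.
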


\begin{proof}
Begin with an arbitrary dyadic cover $\cc$ with diameter less than $\delta$. As $K$ is an $s$-set, we
may assume that (\ref{equation 7}) holds for every $Q\in\bigcup_{\|\cc\|<2^{-i}}\D_i$. To see this, let
$i_0\ge 1$ such that $2^{-i_0-1}<\|\cc\|\le 2^{-i_0}$. Look at the restriction of $\cc$ to each
$Q\in\D_{i_0}$ individually. By Lemma \ref{lemma 1}, we may assume that
$$\sum_{\tilde Q\in\cc\atop{\tilde Q\subset Q}}|\tilde Q|^s\le b\cdot |Q|^s,\ \ \forall\,Q\in\D_{i_0}.$$
As a consequence,
\begin{equation}\label{equation 8}
\sum_{\tilde Q\in\cc\atop{\tilde Q\subset Q}}|\tilde Q|^s\le (4^{i_0}\cdot b)\cdot |Q|^s
\end{equation}
for any $Q\in\bigcup_{0\le i\le i_0}\D_i$, that is, (\ref{equation 7}) holds for large scales. To control
the small ones, apply the following operation: whenever $Q\in\bigcup_{i>i_0}\D_i$ is such that
$$\sum_{\tilde Q\in\cc\atop{\tilde Q\subset Q}}|\tilde Q|^s> |Q|^s,$$
we change $\cc$ by $\cc\cup\{Q\}\backslash\{\tilde Q\in\cc\,;\,\tilde Q\subset Q\}$. It is clear that such
operation preserves the inequality (\ref{equation 8}) and so, after a finite number of steps, we end up with a
good dyadic cover.
\end{proof}

\section{Proof of Theorems 1.1 and 1.2}\label{sec proof of thm}

\begin{proof}[Proof of Theorem \ref{thm 1}.]
Let $(\cc_i)_{i\ge 1}$ be a sequence of good dyadic covers such that $\|\cc_i\|\rightarrow 0$. By
(\ref{equation 9}), we have
\begin{eqnarray*}
I_i&\ll&\sum_{Q,\tilde Q\in\cc_i}|x-\tilde x|^{-1}\cdot |Q|^s\cdot|\tilde Q|^s\\
   &=&\sum_{Q\in\cc_i}\sum_{j=0}^{\infty}\sum_{\tilde Q\in\cc_i\atop{2^{-j-1}<|x-\tilde x|\le 2^{-j}}}
     |x-\tilde x|^{-1}\cdot |Q|^s\cdot|\tilde Q|^s\\
   &\le&\sum_{Q\in\cc_i}\sum_{j=0}^{\infty}\sum_{\tilde Q\in\cc_i\atop{\tilde Q\subset B_{3\cdot 2^{-j}}(x)}}
     |x-\tilde x|^{-1}\cdot |Q|^s\cdot|\tilde Q|^s\\
   &\ll&\sum_{Q\in\cc_i}|Q|^s\sum_{j=0}^{\infty}2^j\cdot \left(2^{-j}\right)^s\\
   &=&\sum_{Q\in\cc_i}|Q|^s\sum_{j=0}^{\infty}\left(2^j\right)^{1-s}\\
   &\ll&\sum_{Q\in\cc_i}|Q|^s\\
   &\ll&1,
\end{eqnarray*}
establishing (\ref{equation 2}). Define, for each $\ve>0$, the sets
$$G^i_\ve=\left\{\te\in[-\pi/2,\pi/2]\,;\,\int_{L_\te}\left(f_\te^{\cc_i}\right)^2dm<\ve^{-1}\right\},\ \ i\ge 1.$$
Then $m\left([-\pi/2,\pi/2]\backslash G_\ve^i\right)\ll\ve$, and the same holds for the set
$$G_\ve=\bigcap_{i\ge 1}\bigcup_{j=i}^{\infty}G_\ve^j\,.$$
If $\te\in G_\ve$, then
$$m\left(\proj_\te(\cc_i)\right)\gg\ve\,,\ \text{ for infinitely many }n,$$
which implies that $m\left(\proj_\te(K)\right)>0$. Finally, the set $G=\bigcup_{i\ge 1}G_{1/i}$ satisfies $m([-\pi/2,\pi/2]\backslash G)=0$ and $m\left(\proj_\te(K)\right)>0$, for any $\te\in G$.
\end{proof}

A direct consequence is the

\begin{corollary}\label{corollary}
The measure $\mu_\te=(\proj_\te)_*(m_d|_K)$ is absolutely continuous with respect to $m$, for
$m$-almost every $\te\in\R$.
\end{corollary}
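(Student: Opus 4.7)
The plan is to promote the $L^2$-estimate at the heart of the proof of Theorem \ref{thm 1} into an absolute-continuity statement. Fix $\te\in G$; by the very construction of $G$ there exist $\ve>0$ and indices $i_k\uparrow\infty$ with $\|\cc_{i_k}\|\to 0$ and $\int_{L_\te}(f_\te^{\cc_{i_k}})^2\,dm<\ve^{-1}$. My goal is to show that for every open set $V\subset L_\te$,
$$\mu_\te(V)\ll \ve^{-1/2}\cdot m(V)^{1/2}.$$
This will suffice: if $m(A)=0$, outer regularity yields open $V_n\supset A$ with $m(V_n)\to 0$, so $\mu_\te(A)\le \mu_\te(V_n)\to 0$ and therefore $\mu_\te\ll m$.

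To prove the displayed bound, observe that since $\cc_{i_k}$ covers $K$ and every dyadic square $Q$ lies in a ball of radius $\sqrt 2|Q|$ around its centre, Lemma \ref{lemma 1} gives
$$\mu_\te(V)=m_s(K\cap\proj_\te^{-1}(V))\le\!\!\sum_{\substack{Q\in\cc_{i_k}\\ \proj_\te(Q)\cap V\ne\emptyset}}\!\!m_s(K\cap Q)\ll\!\!\sum_{\substack{Q\in\cc_{i_k}\\ \proj_\te(Q)\cap V\ne\emptyset}}\!\!|Q|^s.$$
Writing $V_k=\bigcup\{\proj_\te(Q):Q\in\cc_{i_k},\,\proj_\te(Q)\cap V\ne\emptyset\}$ and using $|Q|^s\asymp |Q|^{s-1}m(\proj_\te(Q))$, the last sum is $\asymp \int_{V_k}f_\te^{\cc_{i_k}}\,dm$. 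Cauchy-Schwarz then yields
$$\int_{V_k}f_\te^{\cc_{i_k}}\,dm\le m(V_k)^{1/2}\cdot\left(\int_{L_\te}(f_\te^{\cc_{i_k}})^2\,dm\right)^{1/2}\le m(V_k)^{1/2}\ve^{-1/2}.$$
Since $V_k$ is contained in the $\|\cc_{i_k}\|$-neighbourhood of $V$, and since any open subset of $\R$ is a countable disjoint union of intervals and therefore has Lebesgue-null boundary, $\limsup_k m(V_k)\le m(V)$; letting $k\to\infty$ gives the claimed bound.

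Because the conclusion $\mu_\te\ll m$ holds for every $\te\in G$ and $m([-\pi/2,\pi/2]\backslash G)=0$, the corollary is proved. The only mildly delicate point in the argument is the boundary convergence $m(V_k)\to m(V)$, which is automatic for open $V\subset\R$; everything else is a direct Cauchy-Schwarz application of the uniform $L^2$ bound already obtained in the proof of Theorem \ref{thm 1}, together with the regularity estimate of Lemma \ref{lemma 1}.
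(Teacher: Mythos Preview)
Your proof is correct and takes a different route from the paper's. The paper deduces the corollary softly from the \emph{conclusion} of Theorem~\ref{thm 1}: if $m(Y)=0$ but $\mu_\te(Y)>0$, set $X=K\cap\proj_\te^{-1}(Y)$; then $m_d(X)>0$, and Theorem~\ref{thm 1} applied to $X$ would give $m(\proj_\te(X))>0$, contradicting $\proj_\te(X)\subset Y$. You instead argue quantitatively from the $L^2$ estimate inside the \emph{proof} of Theorem~\ref{thm 1}, obtaining $\mu_\te(V)\ll m(V)^{1/2}$ for open $V$ via Cauchy--Schwarz and the mass bound of Lemma~\ref{lemma 1}. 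The paper's route is shorter but, as written, hides a quantifier subtlety (the null exceptional set of angles in Theorem~\ref{thm 1} depends on $X$, which itself depends on $\te$), whereas your argument fixes $\te\in G$ once and works uniformly over all $V$. Your route also yields more: the bound $\mu_\te(V)\ll m(V)^{1/2}$ is already a half-step toward Theorem~\ref{thm 2}, whose proof in the paper proceeds by exactly this kind of local averaging and $L^2$ control of $f_\te^{\cc}$. One small point: the step $\limsup_k m(V_k)\le m(V)$ tacitly uses that the $\delta$-neighbourhoods of $V$ have finite measure, which needs $V$ bounded; since $\mu_\te$ is supported in $\proj_\te([0,1)^2)$ you may restrict to bounded $V$ without loss, so this is harmless.
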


\begin{proof}
By Theorem \ref{thm 1}, we have the implication
\begin{equation}\label{equation 10}
X\subset K\, ,\ m_d(X)>0\ \ \Longrightarrow\ \ m(\proj_\te(X))>0,\ \ m\text{-almost every }\te\in\R,
\end{equation}
which is sufficient for the required absolute continuity. Indeed, if $Y\subset L_\te$ satisfies $m(Y)=0$, then
$$\mu_\te(Y)=m_d(X)=0\,,$$
where $X={\proj_\te}^{-1}(Y)$. Otherwise, by (\ref{equation 10}) we would have $m(Y)=m(\proj_\te(X))>0$,
contradicting the assumption.
\end{proof}

Let $f_\te=d\mu_\te/dm$. By the proof of Theorem \ref{thm 1}, we have
\begin{equation}\label{equation 11}
\left\|f_\te^{\cc_i}\right\|_{L^2}\ll 1,\ \ m\text{-a.e. }\te\in\R.
\end{equation}

\begin{proof}[Proof of Theorem 2.]
Define, for each $\ve>0$, the function $f_{\te,\ve}:L_\te\rightarrow\R$ by
$$f_{\te,\ve}(x)=\dfrac{1}{2\ve}\int_{x-\ve}^{x+\ve}f_\te(y)dm(y),\ \ x\in L_\te.$$
As $f_\te$ is an $L^1$-function, the Lebesgue differentiation theorem gives that
$f_\te(x)=\lim_{\ve\rightarrow 0}f_{\te,\ve}(x)$ for m-almost every $x\in L_\te$. If we manage to show
that\footnote{We consider $\left\|f_{\te,\ve}\right\|_{L^2}$ as a function of $\ve>0$.}
\begin{equation}\label{equation 12}
\left\|f_{\te,\ve}\right\|_{L^2}\ll 1,\ \ m\text{-a.e. }\te\in\R,
\end{equation}
then Fatou's lemma establishes the theorem. To this matter, first observe that
\begin{eqnarray*}
f_{\te,\ve}(x)&=&\dfrac{1}{2\ve}\int_{x-\ve}^{x+\ve}f_\te(y)dm(y)\\
              &=&\dfrac{1}{2\ve}\cdot\mu_\te([x-\ve,x+\ve])\\
              &=&\dfrac{1}{2\ve}\cdot m_d\left((\proj_{\te})^{-1}([x-\ve,x+\ve])\cap K\right).
\end{eqnarray*}
In order to estimate this last term, fix $\ve>0$ and let $i>0$ such that $\cc=\cc_i$ has diameter less
than $\ve$. Then
\begin{eqnarray*}
m_d\left((\proj_{\te})^{-1}([x-\ve,x+\ve])\cap K\right)
  &\le&\sum_{Q\in\cc\atop{\proj_\te(Q)\subset[x-2\ve,x+2\ve]}}m_d(Q\cap K)\\
  &\ll&\sum_{Q\in\cc\atop{\proj_\te(Q)\subset[x-2\ve,x+2\ve]}}|Q|^s\\
  &\asymp&\sum_{Q\in\cc\atop{\proj_\te(Q)\subset[x-2\ve,x+2\ve]}}|Q|^{s-1}\cdot m(\proj_\te(Q))\\
  &\le&\int_{x-2\ve}^{x+2\ve}f_\te^{\cc}(y)dm(y),
\end{eqnarray*}
where in the second inequality we applied the conditions of Lemma \ref{lemma 1}. By the Cauchy-Schwarz
inequality, we obtain
$$\left|f_{\te,\ve}(x)\right|^2\ll\dfrac{1}{2\ve}\int_{x-2\ve}^{x+2\ve}\left|f_\te^\cc(y)\right|^2dm(y)$$
and so
\begin{eqnarray*}
\left\|f_{\te,\ve}\right\|_{L^2}^2
    &\ll&\int_{L_\te}\dfrac{1}{2\ve}\int_{x-2\ve}^{x+2\ve}\left|f_\te^\cc(y)\right|^2dm(y)dm(x)\\
&\asymp&\int_{L_\te}\left|f_\te^\cc\right|^2dm\\
&=&\left\|f_\te^\cc\right\|_{L^2}^2
\end{eqnarray*}
which, by (\ref{equation 11}), establishes (\ref{equation 12}).
\end{proof}

\section{Concluding remarks}\label{sec final remarks}

The good feature of the proof is that the discretization idea may be applied to other contexts. For example,
we prove in \cite{LM} a Marstrand type theorem in an arithmetical context.

\section*{Acknowledgments}

The authors are thankful to IMPA for the excellent ambient during the preparation of this
manuscript. The authors are also grateful to Carlos Matheus for carefully reading the preliminary version of this
work. This work was financially supported by CNPq-Brazil and Faperj-Brazil.

\bibliographystyle{amsplain}

\end{document}